\def\algspacing{\alg@unmargin}
\newtheorem{thm}{Theorem}
\newtheorem{lemma}[thm]{Lemma}
\newtheorem{prop}[thm]{Proposition}
\theoremstyle{remark}
\DeclareMathAlphabet{\mathsfsl}{OT1}{cmss}{m}{sl}
\newcommand{\term}{\emph}
\newcommand{\cnst}[1]{\mathrm{#1}}
\renewcommand{\phi}{\varphi}
\newcommand{\Id}{\mathbf{I}}
\newcommand{\coll}[1]{\mathscr{#1}}
\newcommand{\abs}[1]{\left\vert {#1} \right\vert}
\newcommand{\diff}[1]{\mathrm{d}{#1}}
\newcommand{\idiff}[1]{\, \diff{#1}}
\newcommand{\Expect}{\operatorname{\mathbb{E}}}
\newcommand{\vct}[1]{\bm{#1}}
\newcommand{\mtx}[1]{\bm{#1}}
\newcommand{\adj}{*}
\newcommand{\trace}{\operatorname{tr}}
\newcommand{\norm}[1]{\left\Vert {#1} \right\Vert}
\newcommand{\enorm}[1]{\norm{#1}_2}
\newcommand{\triplenorm}[1]{\left\vert\!\left\vert\!\left\vert {#1} \right\vert\!\right\vert\!\right\vert}
\newcommand{\bigO}{\mathrm{O}}
\begin{document}
\title[A Comparison Principle for Random Subspaces]
{A Comparison Principle for Functions \\
of a Uniformly Random Subspace}


\author{Joel~A.~Tropp}


\thanks{2010 {\it Mathematics Subject Classification}.
Primary:
60B20. 
}


\date{28 January 2011.  Revised 23 March 2011.}

\begin{abstract}
This note demonstrates that it is possible to bound the expectation of an arbitrary norm of a random matrix drawn from the Stiefel manifold in terms of the expected norm of a standard Gaussian matrix with the same dimensions.  A related comparison holds for any convex function of a random matrix drawn from the Stiefel manifold.  For certain norms, a reversed inequality is also valid.
\end{abstract}

\maketitle

\section{Main Result}

Many problems in high-dimensional geometry concern the properties of a random $k$-dimensional subspace of the Euclidean space $\mathbb{R}^n$.  For instance, the Johnson--Lindenstrauss Lemma~\cite{JL84:Extensions-Lipschitz} shows that, typically, the metric geometry of a collection of $N$ points is preserved when we project the points onto a random subspace with dimension $\bigO(\log N)$.  Another famous example is Dvoretsky's Theorem~\cite{Dvo61:Some-Results,Mil71:New-Proof,Bal97:Elementary-Introduction}, which states that, typically, the intersection between the unit ball of a Banach space with dimension $N$ and a random subspace with dimension $\bigO(\log N)$ is comparable with a Euclidean ball.

In geometric problems, it is often convenient to work with matrices rather than subspaces.  Therefore, we introduce the \term{Stiefel manifold},
$$
\mathbb{V}_{k}^n :=
	\{ \mtx{Q} \in \mathbb{M}^{n \times k} : \mtx{Q}^\adj \mtx{Q} = \Id \},
$$
which is the collection of real $n \times k$ matrices with orthonormal columns.  
The elements of the Stiefel manifold $\mathbb{V}_k^n$ are sometimes called \term{$k$-frames in $\mathbb{R}^n$}.  The range of a $k$-frame in $\mathbb{R}^n$ determines a $k$-dimensional subspace of $\mathbb{R}^n$, but the mapping from $k$-frames to subspaces is not injective.
  
 
It is easy to check that each Stiefel manifold is invariant under orthogonal transformations on the left and the right.
An important consequence is that the Stiefel manifold $\mathbb{V}_k^n$ admits an invariant Haar probability measure, which can be regarded as a uniform distribution on $k$-frames in $\mathbb{R}^n$.  A matrix $\mtx{Q}$ drawn from the Haar measure on $\mathbb{V}_{k}^n$ is called a \term{random $k$-frame in $\mathbb{R}^n$}.

It can be challenging to compute functions of a random $k$-frame $\mtx{Q}$.  The main reason is that the entries of the matrix $\mtx{Q}$ are correlated on account of the orthonormality constraint $\mtx{Q}^\adj \mtx{Q} = \Id$.  Nevertheless, if we zoom in on a small part of the matrix, the local correlations are very weak because orthogonality is a global property.  In other words, the entries of a small submatrix of $\mtx{Q}$ are effectively \emph{independent} for many practical purposes~\cite{Jia06:How-Many}.

As a consequence of this observation, we might hope to replace certain calculations on a random $k$-frame by calculations on a random matrix with \emph{independent} entries.  An obvious candidate is a matrix $\mtx{G} \in \mathbb{M}^{n \times k}$ whose entries are independent ${\rm N}(0, n^{-1})$ random variables.  We call the associated probability distribution on $\mathbb{M}^{n \times k}$ the \emph{normalized Gaussian distribution}.

Why is this distribution a good proxy for a random $k$-frame in $\mathbb{R}^n$?  First, a normalized Gaussian matrix $\mtx{G}$ verifies $\Expect (\mtx{G}^\adj \mtx{G}) = \Id$, so the columns of $\mtx{G}$ are orthonormal on average.  Second, the normalized Gaussian distribution is invariant under orthogonal transformations from the left and the right, so it shares many algebraic and geometric properties with a random $k$-frame.  Furthermore, we have a wide variety of methods for working with Gaussian matrices, in contrast with the more limited set of techniques available for dealing with random $k$-frames.

These intuitions are well established in the random matrix literature, and many authors have developed detailed quantitative refinements.  In particular, we mention Jiang's paper~\cite{Jia06:How-Many} and its references, which discuss the proportion of entries in a random orthogonal matrix that can be simultaneously approximated using independent standard normal variables.  Subsequent work by Chatterjee and E.~Meckes~\cite{CM08:Multivariate-Normal} demonstrates that the joint distribution of $k$ (linearly independent) linear functionals of a random orthogonal matrix is close in Wasserstein distance to an appropriate Gaussian distribution, provided that $k = {\rm o}(n)$.  


We argue that there is a general comparison principle for random $k$-frames and normalized Gaussian matrices of the same size.  Recall that a convex function is called \term{sublinear} when it is positive homogeneous.  Norms, in particular, are sublinear.  Theorem~\ref{thm:norm-comparison} ensures that the expectation of a nonnegative sublinear function of a random $k$-frame is dominated by that of a normalized Gaussian matrix.  This result also allows us to study moments and, therefore, tail behavior.

\begin{thm}[Sublinear Comparison Principle] \label{thm:norm-comparison}
Assume that $k = \rho n$ for $\rho \in (0,1]$.  Let $\mtx{Q}$ be uniformly distributed on the Stiefel manifold $\mathbb{V}_k^n$, and let $\mtx{G} \in \mathbb{M}^{n \times k}$ be a matrix with independent ${\rm N}(0, n^{-1})$ entries.  For each nonnegative, sublinear, convex function $\abs{\cdot}$ on $\mathbb{M}^{n \times k}$ and each weakly increasing, convex function $\Phi : \mathbb{R} \to \mathbb{R}$,
$$
\Expect \Phi( \abs{ \mtx{Q} } ) \leq \Expect \Phi( (1+\rho/2) \abs{ \mtx{G} } ).
$$
In particular, for all $k \leq n$,
$$
\Expect \Phi( \abs{ \mtx{Q} } ) \leq \Expect \Phi( 1.5 \abs{ \mtx{G} } ).
$$
\end{thm}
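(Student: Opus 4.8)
The plan is to realize the random frame $\mtx{Q}$ as the orthogonal polar factor of the Gaussian matrix $\mtx{G}$ and then transfer the comparison through two applications of Jensen's inequality. Write the polar decomposition $\mtx{G} = \mtx{Q}\mtx{H}$, where $\mtx{H} = (\mtx{G}^\adj\mtx{G})^{1/2}$ is the positive-definite factor, invertible almost surely since $\mtx{G}$ has full column rank with probability one. Because the normalized Gaussian distribution is invariant under left orthogonal transformations, $\mtx{Q}$ is Haar-distributed on $\mathbb{V}_k^n$ and, crucially, \emph{independent} of $\mtx{H}$; this is the standard probabilistic input, and it is exactly the feature that makes $\mtx{G}$ a good proxy for $\mtx{Q}$. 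Invariance under right orthogonal transformations forces $\Expect\mtx{H}$ to commute with every orthogonal matrix, so $\Expect\mtx{H} = \mu\Id$ for the scalar $\mu = k^{-1}\Expect\trace(\mtx{G}^\adj\mtx{G})^{1/2}$.

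Granting $\Expect\mtx{H} = \mu\Id$ with $\mtx{Q}$ independent of $\mtx{H}$, I would condition on $\mtx{Q}$ and use independence to write $\Expect[\mtx{G}\mid\mtx{Q}] = \mtx{Q}\,\Expect\mtx{H} = \mu\mtx{Q}$. Positive homogeneity and convexity of $\abs{\cdot}$ then give, via Jensen in conditional form, $\mu\abs{\mtx{Q}} = \abs{\Expect[\mtx{G}\mid\mtx{Q}]}\le\Expect[\abs{\mtx{G}}\mid\mtx{Q}]$. Since $\Phi$ is weakly increasing and convex, applying $\Phi$ and a second conditional Jensen step yields $\Phi(\abs{\mtx{Q}})\le\Phi(\mu^{-1}\Expect[\abs{\mtx{G}}\mid\mtx{Q}])\le\Expect[\Phi(\mu^{-1}\abs{\mtx{G}})\mid\mtx{Q}]$. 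Taking total expectations collapses the conditioning and delivers $\Expect\Phi(\abs{\mtx{Q}})\le\Expect\Phi(\mu^{-1}\abs{\mtx{G}})$. The whole theorem therefore reduces to the single scalar estimate $\mu^{-1}\le1+\rho/2$, that is, $\Expect\trace(\mtx{G}^\adj\mtx{G})^{1/2}\ge k/(1+\rho/2)$; the final clean statement then follows at once because $\rho\le1$ forces $1+\rho/2\le1.5$.

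The main obstacle is precisely this lower bound on the expected nuclear norm $\Expect\trace(\mtx{G}^\adj\mtx{G})^{1/2}$ of the normalized Wishart matrix. The natural route is a pointwise concave estimate for the scalar square root together with the moment computations $\Expect\trace(\mtx{G}^\adj\mtx{G}) = k$ and $\Expect\trace(\mtx{G}^\adj\mtx{G})^2 = k + k(k+1)/n$, which pin down the mean and second moment of the averaged spectral measure of $\mtx{G}^\adj\mtx{G}$ as $1$ and $1+\rho+1/n$. I expect the delicate point to be extracting the \emph{sharp} coefficient: a crude global bound such as $\sqrt{t}\ge\tfrac{3}{2}t - \tfrac12 t^2$ only produces $\mu\ge 1-\tfrac{\rho}{2}-\tfrac{\rho}{2k}$, which is a hair too weak, since $(1-\rho/2)^{-1} = 1+\tfrac\rho2+\tfrac{\rho^2}{4}+\cdots$ exceeds $1+\rho/2$; the loss comes from the square root being badly approximated near the origin. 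Pushing the constant down to $1+\rho/2$ requires exploiting the concentration of the Wishart spectrum near $1$, for instance through the two-moment (Lyapunov) inequality $\mu\ge(1+\rho+1/n)^{-1/2}$, sharpened with a higher spectral moment or a truncation to control the regime of very small $k$, where the spectrum is most tightly concentrated and the two-moment bound is loosest.
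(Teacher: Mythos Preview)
Your reduction is sound: the polar factorization $\mtx{G}=\mtx{Q}\mtx{H}$ with $\mtx{Q}$ Haar and independent of $\mtx{H}$, together with $\Expect\mtx{H}=\mu\Id$, does yield $\Expect\Phi(\abs{\mtx{Q}})\le\Expect\Phi(\mu^{-1}\abs{\mtx{G}})$ exactly as you say. This parallels the paper's Theorem~\ref{thm:full-comparison}, which carries out the same Jensen step but with the \emph{Bartlett} (QR) factorization $\mtx{\Gamma}=\mtx{Q}\mtx{R}$ in place of the polar one, and a random permutation conjugation to force $\Expect(\mtx{P}\mtx{R}\mtx{P}^T)=\alpha\Id$ with $\alpha=k^{-1}\sum_{i=1}^k\Expect X_i$, $X_i^2\sim\chi^2_{n-i+1}$.

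The gap is exactly where you locate it, and your proposed fixes do not close it. Your Lyapunov bound $\mu\ge(1+\rho+1/n)^{-1/2}$ gives $\mu^{-1}\le 1+\rho/2$ only when $k^2\ge 4n$; for $k=o(\sqrt{n})$ it is off by a factor of roughly two, and vague appeals to ``higher moments or truncation'' are not a proof. The paper's choice of the Bartlett decomposition is precisely what makes the scalar estimate tractable: the diagonal of $\mtx{R}$ consists of independent $\chi$ variables, so $\alpha$ is an explicit average of gamma-function ratios, and the bound $\alpha^{-1}\sqrt{n}\le 1+\rho/2$ follows from the elementary inequality $\sqrt{p-1/2}\le\sqrt{2}\,\Gamma((p+1)/2)/\Gamma(p/2)$ together with a midpoint-rule integral comparison (Lemmas~\ref{lem:estimate-i}--\ref{lem:simple-estimate}). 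No spectral information about the Wishart ensemble is needed.

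As a coda, your scalar inequality is in fact \emph{implied} by the paper's: since $\trace\mtx{R}\le\smnorm{*}{\mtx{R}}=\trace(\mtx{R}^T\mtx{R})^{1/2}$ for any matrix, one has $\alpha\le\sqrt{n}\,\mu$, and hence $\mu^{-1}\le\alpha^{-1}\sqrt{n}\le 1+\rho/2$. So the polar route ultimately gives a constant at least as good as the QR route---but to certify it you still need the Bartlett computation, which is the idea you are missing.
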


Note that the leading constant in the bound is asymptotic to one when $k = {\rm o}(n)$.  Conversely, Section~\ref{sec:examples} identifies situations where the leading constant must be at least one.  We establish Theorem~\ref{thm:norm-comparison} in Section~\ref{sec:proof} as a consequence of a more comprehensive result, Theorem~\ref{thm:full-comparison}, for convex functions of a random $k$-frame.  



A simple example suffices to show that Theorem~\ref{thm:norm-comparison} does not admit a matching lower bound, no matter what comparison factor $\beta$ we allow.  Indeed, suppose that we fix a positive number $\beta$.  Write $\norm{\cdot}$ for the spectral norm (i.e., the operator norm between two Hilbert spaces), and consider the weakly increasing, convex function
$$
\Phi(t) := \left((t)_+ - 1 \right)_+
\quad\text{where}\quad
(a)_+ := \max\{ 0, a \}.
$$
For a normalized Gaussian matrix $\mtx{G} \in \mathbb{M}^{n \times k}$, we compute that
$$
\Expect \Phi( \beta \norm{ \mtx{G} } )
	= \Expect \left( \beta \norm{ \mtx{G} } - 1 \right)_+
	> 0
$$
because there is always a positive probability that $\beta \norm{\mtx{G}} \geq 2$.
Meanwhile, the spectral norm of a random $k$-frame $\mtx{Q}$ in $\mathbb{R}^n$ satisfies $\norm{ \mtx{Q} } = 1$, so
$$
\Expect \Phi( \norm{ \mtx{Q} } )
	= \Expect \Phi( 1 )
	= 0.
$$
Inexorably,
$$
\Expect \Phi( \beta \norm{ \mtx{G} } )
	\leq \Expect \Phi( \norm{ \mtx{Q} } )
	\quad\Longrightarrow\quad
	\beta \leq 0.
$$
Therefore, it is impossible to control $\Phi(\beta \abs{\mtx{G}})$ using $\Phi(\abs{\mtx{Q}})$ unless we impose additional restrictions.  Turn to Section~\ref{sec:converse} for some conditions under which we can reverse the comparison in Theorem~\ref{thm:norm-comparison}.

One of the anonymous referees has made a valuable point that deserves amplification.  Note that a random orthogonal matrix with dimension one is a scalar Rademacher variable, while a normalized Gaussian matrix with dimension one is a scalar Gaussian variable.  From this perspective, Theorem~\ref{thm:norm-comparison} resembles a noncommutative version of the classical comparison between Rademacher series and Gaussian series in a Banach space~\cite[Sec.~4.2]{LT91:Probability-Banach}.  Let us state an extension of Theorem~\ref{thm:norm-comparison} that makes this connection explicit.


\begin{thm}[Noncommutative Gaussian Comparison Principle] \label{thm:nc-gauss-comparison}
Fix a sequence of square matrices $\{ \mtx{A}_j : j = 1, \dots, J \} \subset \mathbb{M}^{n \times n}$.  Consider an independent family $\{ \mtx{Q}_j : j = 1, \dots, J \} \subset \mathbb{M}^{n \times n}$ of random orthogonal matrices, and an independent family $\{ \mtx{G}_j : j = 1, \dots, J \} \subset \mathbb{M}^{n \times n}$ of normalized Gaussian matrices.  For each nonnegative, sublinear, convex function $\abs{\cdot}$ on $\mathbb{M}^{n \times n}$ and each weakly increasing, convex function $\Phi : \mathbb{R} \to \mathbb{R}$,
$$
\Expect \Phi\left( \abs{ \sum\nolimits_{j=1}^J \mtx{Q}_j \mtx{A}_j } \right)
	\leq \Expect \Phi\left( 1.5 \abs{ \sum\nolimits_{j=1}^J \mtx{G}_j \mtx{A}_j } \right).
$$
\end{thm}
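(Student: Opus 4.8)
The plan is to prove Theorem~\ref{thm:nc-gauss-comparison} by a one-matrix-at-a-time replacement argument, exactly as in the classical passage from a Rademacher series to a Gaussian series, with the single-variable step supplied by the convex-function version of the comparison principle. Throughout, recall that $\abs{\cdot}$ is positively homogeneous and convex, while $\Phi$ is convex and weakly increasing.

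The engine is a conditional single-swap. Fix a matrix $M \in \mathbb{M}^{n \times n}$ and one coefficient $A \in \mathbb{M}^{n \times n}$, and consider $F(X) := \Phi(\abs{M + XA})$ on $\mathbb{M}^{n \times n}$. Since $X \mapsto M + XA$ is affine, $\abs{\cdot}$ is convex, and $\Phi$ is convex and increasing, the composition $F$ is convex, although in general it is neither sublinear nor nonnegative. I would invoke the convex-function comparison, Theorem~\ref{thm:full-comparison}, in the square case $k = n$ (so $\rho = 1$): with the sharp factor $\beta := \beta_n$ that it provides, one has $\Expect F(\mtx{Q}) \le \Expect F(\beta\, \mtx{G})$ for every convex $F$, where $\mtx{Q}$ is $n \times n$ Haar orthogonal and $\mtx{G}$ is $n \times n$ normalized Gaussian. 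Unwinding $F$, this is the desired single-swap $\Expect \Phi(\abs{M + \mtx{Q}A}) \le \Expect \Phi(\abs{M + \beta\,\mtx{G}A})$. It is essential to keep the sharp $\beta$ here rather than the larger $1.5$, because for the translated convex function $F$ there is no monotonicity in the scaling factor; only the final object $\Phi(\abs{\cdot})$, applied to the full sum, is a monotone function of a homogeneous one.

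With the single-swap in hand, I would build a hybrid between the orthogonal and the rescaled Gaussian families. Writing $\tilde{\mtx{G}}_j := \beta\,\mtx{G}_j$, set $\mtx{S}_m := \sum_{j \le m} \tilde{\mtx{G}}_j \mtx{A}_j + \sum_{j > m} \mtx{Q}_j \mtx{A}_j$ for $0 \le m \le J$, so that $\mtx{S}_0 = \sum_j \mtx{Q}_j \mtx{A}_j$ and $\mtx{S}_J = \beta \sum_j \mtx{G}_j \mtx{A}_j$. Fix $m$ and freeze every variable except $\mtx{Q}_m$ and $\mtx{G}_m$; the remaining terms collapse to a fixed matrix $\mtx{M}_m$. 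Because $\mtx{Q}_m$ and $\mtx{G}_m$ are independent of that data, the single-swap with $M = \mtx{M}_m$ and $A = \mtx{A}_m$ gives $\Expect_{\mtx{Q}_m} \Phi(\abs{\mtx{M}_m + \mtx{Q}_m \mtx{A}_m}) \le \Expect_{\mtx{G}_m} \Phi(\abs{\mtx{M}_m + \beta\,\mtx{G}_m \mtx{A}_m})$. Integrating over the frozen variables turns this into $\Expect \Phi(\abs{\mtx{S}_{m-1}}) \le \Expect \Phi(\abs{\mtx{S}_m})$, and telescoping over $m = 1, \dots, J$ yields $\Expect \Phi(\abs{\mtx{S}_0}) \le \Expect \Phi(\abs{\mtx{S}_J})$. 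Positive homogeneity of $\abs{\cdot}$ rewrites the right-hand side as $\Expect \Phi(\beta\,\abs{\sum_j \mtx{G}_j \mtx{A}_j})$, and since $\beta \le 1 + \rho/2 = 1.5$, monotonicity of $\Phi$ replaces $\beta$ by $1.5$, which is the claim.

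The main obstacle is the single-swap, namely the convex comparison with a factor no larger than $1.5$. Here I would realize the coupling through the polar decomposition $\mtx{G} = \mtx{Q}\,\mtx{H}$ with $\mtx{H} = (\mtx{G}^\adj \mtx{G})^{1/2}$; for a square Gaussian, left invariance makes $\mtx{Q}$ Haar-distributed and independent of $\mtx{H}$, while right invariance forces $\Expect \mtx{H} = c_n\,\Id$ for a scalar $c_n = 1/\beta$. Then $\Expect[\beta\,\mtx{G} \mid \mtx{Q}] = \mtx{Q}$, and conditional Jensen applied to the convex $F$ gives $F(\mtx{Q}) = F(\Expect[\beta\,\mtx{G} \mid \mtx{Q}]) \le \Expect[F(\beta\,\mtx{G}) \mid \mtx{Q}]$, whence $\Expect F(\mtx{Q}) \le \Expect F(\beta\,\mtx{G})$. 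The quantitative heart is the bound $c_n \ge 2/3$, equivalently $\Expect (\mtx{G}^\adj \mtx{G})^{1/2} \psdge \tfrac{2}{3}\Id$, a Wishart/Marchenko--Pastur estimate on the mean of a normalized singular value; this is the one place where the precise constant $1.5$ enters, and it is what must be pushed through carefully rather than merely invoked.
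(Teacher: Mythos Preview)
Your telescoping argument in the first two paragraphs is correct and is exactly the ``obvious variation'' the paper has in mind: replace the $\mtx{Q}_j$ by $\beta\,\mtx{G}_j$ one at a time, at each step applying the convex comparison of Theorem~\ref{thm:full-comparison} (with $k=n$) to the convex function $F(X)=\Phi(\abs{M+X\mtx{A}_m})$, and only at the end use homogeneity of $\abs{\cdot}$ and monotonicity of $\Phi$ to replace the sharp $\beta=\sqrt{n}/\alpha(n,n)$ by $1.5$. Your remark that the sharp $\beta$, not $1.5$, must be used inside the swaps is the one genuine subtlety, and you handle it correctly.

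Your last paragraph, however, departs from the paper. The paper proves the single-swap (Theorem~\ref{thm:full-comparison}) via the Bartlett {\sf QR} factorization $\mtx{\Gamma}=\mtx{Q}\mtx{R}$ together with the permutation trick $\Expect(\mtx{P}\mtx{R}\mtx{P}^{T})=\alpha\,\Id$; you instead propose the polar factorization $\mtx{G}=\mtx{Q}\mtx{H}$ with $\mtx{H}=(\mtx{G}^{\adj}\mtx{G})^{1/2}$ and $\Expect\mtx{H}=c_n\,\Id$. This polar route is valid and in fact yields a constant at least as good, since $\sqrt{n}\,c_n=\tfrac{1}{n}\Expect\trace(\mtx{\Gamma}^{\adj}\mtx{\Gamma})^{1/2}\ge \tfrac{1}{n}\Expect\trace\mtx{R}=\alpha$ (the nuclear norm dominates the trace of the triangular factor). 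But your stated justification, a ``Marchenko--Pastur estimate,'' is only asymptotic and does not by itself give $c_n\ge 2/3$ for every $n$; you would still need either the inequality just mentioned combined with Lemma~\ref{lem:simple-estimate}, or a separate finite-$n$ argument. If you simply invoke Theorem~\ref{thm:full-comparison} as a black box, as you do earlier, this issue disappears and the proof is complete as written.
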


We can complete the proof of Theorem~\ref{thm:nc-gauss-comparison} using an obvious variation on the arguments behind Theorem~\ref{thm:norm-comparison}. 
We omit further details out of consideration for the reader's patience.

\section{A Few Examples} \label{sec:examples}

Before proceeding with the proof of Theorem~\ref{thm:norm-comparison}, we present some applications that may be interesting.  We need the following result~\cite[Thm.~3.20]{LT91:Probability-Banach}, which is due to Gordon~\cite{Gor88:Gaussian-Processes}.

\begin{prop}[Spectral Norm of a Gaussian Matrix] \label{prop:gordon}
Let $\mtx{G} \in \mathbb{M}^{n \times k}$ be a random matrix with independent ${\rm N}(0, n^{-1})$ entries.  Then
$$
\Expect \norm{ \mtx{G} }_{\ell_2^k \to \ell_2^n}
	\leq 1 + \sqrt{k/n}. 
$$
\end{prop}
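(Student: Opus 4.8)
The plan is to realize the spectral norm as the supremum of a Gaussian process and to control it with the Sudakov--Fernique comparison inequality, which is the engine behind Gordon's bound. First I would reduce to the unnormalized case: write $\mtx{G} = n^{-1/2} \mtx{H}$, where $\mtx{H} \in \mathbb{M}^{n \times k}$ has independent ${\rm N}(0,1)$ entries, so that $\norm{\mtx{G}} = n^{-1/2} \smax(\mtx{H})$. The largest singular value admits the bilinear representation
$$
\smax(\mtx{H}) = \max_{\vct{u} \in \Sspace{n-1}} \ \max_{\vct{v} \in \Sspace{k-1}} \ \ip{\vct{u}}{\mtx{H}\vct{v}},
$$
so the centered Gaussian process $X_{\vct{u},\vct{v}} := \ip{\vct{u}}{\mtx{H}\vct{v}}$, indexed by the compact product of spheres, has supremum equal to $\smax(\mtx{H})$.

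The key step is to introduce the comparison process $Y_{\vct{u},\vct{v}} := \ip{\vct{g}}{\vct{u}} + \ip{\vct{h}}{\vct{v}}$, where $\vct{g} \in \Rspace{n}$ and $\vct{h} \in \Rspace{k}$ are independent standard Gaussian vectors. A direct covariance computation yields the increments
$$
\Expect(X_{\vct{u},\vct{v}} - X_{\vct{u}',\vct{v}'})^2 = 2 - 2\ip{\vct{u}}{\vct{u}'}\ip{\vct{v}}{\vct{v}'}
\quad\text{and}\quad
\Expect(Y_{\vct{u},\vct{v}} - Y_{\vct{u}',\vct{v}'})^2 = 4 - 2\ip{\vct{u}}{\vct{u}'} - 2\ip{\vct{v}}{\vct{v}'}.
$$
Subtracting, I would observe that the gap factors as $2\,(1 - \ip{\vct{u}}{\vct{u}'})(1 - \ip{\vct{v}}{\vct{v}'})$, which is nonnegative because inner products of unit vectors never exceed one. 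Hence the $Y$-increments dominate the $X$-increments pointwise, and Sudakov--Fernique gives $\Expect \sup_{\vct{u},\vct{v}} X_{\vct{u},\vct{v}} \leq \Expect \sup_{\vct{u},\vct{v}} Y_{\vct{u},\vct{v}}$.

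The comparison process decouples, so its supremum is $\sup_{\vct{u}} \ip{\vct{g}}{\vct{u}} + \sup_{\vct{v}} \ip{\vct{h}}{\vct{v}} = \enorm{\vct{g}} + \enorm{\vct{h}}$. Jensen's inequality then produces $\Expect \enorm{\vct{g}} \leq (\Expect \enormsq{\vct{g}})^{1/2} = \sqrt{n}$ and likewise $\Expect \enorm{\vct{h}} \leq \sqrt{k}$. Assembling the pieces and rescaling by $n^{-1/2}$ yields $\Expect \norm{\mtx{G}} \leq 1 + \sqrt{k/n}$, as claimed. I expect the main obstacle to be choosing the correct comparison process and confirming the increment inequality, since the sharp additive constant rests precisely on the factorization $(1 - \ip{\vct{u}}{\vct{u}'})(1 - \ip{\vct{v}}{\vct{v}'}) \geq 0$; a less symmetric choice of $Y$ would not recover it. A secondary technical point is justifying Sudakov--Fernique over a continuous index set, but since $X$ and $Y$ have almost surely continuous sample paths on the compact sphere product, this follows by restricting to finite nets and passing to the limit, so I would not belabor it.
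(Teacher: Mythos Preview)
Your argument is correct and is precisely the standard Sudakov--Fernique/Slepian derivation of Gordon's bound: the increment comparison via the factorization $2(1-\ip{\vct{u}}{\vct{u}'})(1-\ip{\vct{v}}{\vct{v}'})\geq 0$ and the decoupling of the comparison process are exactly the right ingredients, and the Jensen step for $\Expect\enorm{\vct{g}}\leq\sqrt{n}$ gives the sharp additive form.

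The paper, however, does not supply a proof of this proposition at all; it simply quotes the result from Gordon~\cite{Gor88:Gaussian-Processes} via~\cite[Thm.~3.20]{LT91:Probability-Banach}. So there is nothing to compare against beyond noting that what you have written is essentially the proof one finds in those references.
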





\subsection{How good are the constants?}
Consider a uniformly random orthogonal matrix $\mtx{Q} \in \mathbb{V}_n^n$.  Evidently, its spectral norm $\norm{ \mtx{Q} } = 1$.  Let $\mtx{G} \in \mathbb{M}^{n\times n}$ be a normalized Gaussian matrix.  Theorem~\ref{thm:norm-comparison} and Proposition~\ref{prop:gordon} ensure that
$$
1 = \Expect \norm{ \mtx{Q} } \leq 1.5 \Expect \norm{\mtx{G}} \leq 3.
$$
Thus, the constant 1.5 in Theorem~\ref{thm:norm-comparison} cannot generally be improved by a factor greater than three.

Next, we specialize to the trivial case where $k = n = 1$.  Let $Q$ be a Rademacher random variable, and let $G$ be a standard Gaussian random variable.  Theorem~\ref{thm:norm-comparison} implies that
$$
1 = \Expect \abs{ Q } \leq 1.5 \Expect \abs{ G }
	= 1.5 \sqrt{\frac{2}{\pi}}
	< 1.2.
$$
Therefore, we cannot improve the constant by a factor of more than 1.2 if we demand a result that holds when $n$ is small.

Finally, consider the case where $k = 1$.  Let $\vct{q}$ be a random unit vector in $\mathbb{R}^n$, and let $\vct{g}$ be a vector in $\mathbb{R}^n$ with independent ${\rm N}(0, n^{-1})$ entries.  Applying Theorem~\ref{thm:norm-comparison} with the Euclidean norm, we obtain
$$
1 = \Expect \enorm{ \vct{q} } \leq \left(1 + \frac{1}{2n}\right) \cdot \Expect \enorm{ \vct{g} }
	\leq 1 + \frac{1}{2n}.
$$
This example demonstrates that the best constant in Theorem~\ref{thm:norm-comparison} is at least one when $k = 1$ and $n$ is large.
Related examples show that the best constant is at least one as long as $k = {\rm o}(n)$.

\subsection{Maximum entry of a random orthogonal matrix}
Consider a uniformly random orthogonal matrix $\mtx{Q} \in \mathbb{V}_n^n$, and let $\mtx{G} \in \mathbb{M}^{n\times n}$ be a normalized Gaussian matrix.  Using Theorem~\ref{thm:norm-comparison} and a standard bound for the maximum of standard Gaussian variables, we estimate that 
$$
\Expect \max_{i,j} \abs{ \mtx{Q}_{ij} }
	\leq 1.5 \Expect \max_{i, j} \abs{ \mtx{G}_{ij} }
	\leq 1.5 \sqrt{\frac{ 2 \log(n^2) + 1 }{ n }}
	= 3 \sqrt{\frac{ \log(n) + 1/4}{ n }}
$$
Jiang~\cite{Jia05:Maxima-Entries} has shown that, almost surely, a sequence $\{\mtx{Q}^{(n)} \}$ of random orthogonal matrices with $\mtx{Q}^{(n)} \in \mathbb{V}_n^n$ has the limitng behavior
$$
\liminf_{n\to \infty}
\sqrt{\frac{n}{\log n}} \cdot \max_{i, j} \big\vert \mtx{Q}^{(n)}_{ij} \big\vert
= 2
\quad\text{and}\quad
\limsup_{n\to \infty}
\sqrt{\frac{n}{\log n}} \cdot \max_{i, j} \big\vert \mtx{Q}^{(n)}_{ij} \big\vert
= \sqrt{6}.
$$
We see that our simple estimate is not sharp, but it is very reasonable.

\subsection{Spectral norm of a submatrix of a random $k$-frame}
Consider a uniformly random $k$-frame $\mtx{Q} \in \mathbb{V}_{k}^n$, and let $\mtx{G} \in \mathbb{M}^{n \times k}$ be a normalized Gaussian matrix.  Define the linear map $\coll{L}_{j}$ that restricts an $n \times k$ matrix to its first $j$ rows and rescales it by $\sqrt{n/j}$.  As a consequence, the columns of the $j \times k$ matrix $\coll{L}_j(\mtx{Q})$ approximately have unit Euclidean norm.  We may compute that
$$
\Expect \norm{ \coll{L}_{j}(\mtx{Q}) }
	\leq (1+(k/2n)) \Expect \norm{ \coll{L}_{j}( \mtx{G} ) }
	\leq (1+ (k/2n)) (1 + \sqrt{k/j})
$$
because of Theorem~\ref{thm:norm-comparison} and Proposition~\ref{prop:gordon}.

This estimate is interesting because it applies for all values of $j$ and $k$.  Note that the leading constant $1 + (k/2n)$ is asymptotic to one whenever $k = {\rm o}(n)$.  In contrast, we recall Jiang's result~\cite{Jia06:How-Many} that the total-variation distance between the distributions of $\coll{L}_{j}(\mtx{Q})$ and $\coll{L}_{j}(\mtx{G})$ vanishes if and only if $j, k = {\rm o}(\sqrt{n})$.  A related fact is that, under a natural coupling of $\mtx{Q}$ and $\mtx{G}$, the matrix $\ell_\infty$-norm distance between $\coll{L}_{j}(\mtx{Q})$ and $\coll{L}_{j}(\mtx{G})$ vanishes in probability if and only if $k = {\rm o}(n / \log n)$.


%



\section{Proof of the Sublinear Comparison Principle} \label{sec:proof}

The main tool in our proof is a well-known theorem of Bartlett that describes the statistical properties of the {\sf QR} decomposition of a standard Gaussian matrix, i.e., a matrix with independent ${\rm N}(0, 1)$ entries.  See Muirhead's book~\cite{Mui82:Aspects-Multivariate} for a detailed derivation of this result.

\begin{prop}[The Bartlett Decomposition] \label{prop:bartlett}
Assume that $k \leq n$, and let $\mtx{\Gamma} \in \mathbb{M}^{n \times k}$ be a standard Gaussian matrix.  Then
$$
\mtx{\Gamma}_{n \times k} \sim \mtx{Q}_{n \times k} \, \mtx{R}_{k \times k}.
$$
The factors $\mtx{Q}$ and $\mtx{R}$ are statistically independent.
The matrix $\mtx{Q}$ is uniformly distributed on the Stiefel manifold $\mathbb{V}_{k}^n$.  The matrix $\mtx{R}$ is a random upper-triangular matrix of the form
$$
\mtx{R} = \begin{bmatrix}
X_1 & Y_{12} & Y_{13} & \dots & Y_{1k} \\
& X_2 & Y_{23} & \dots & Y_{2k} \\
&& \ddots & \ddots & \vdots \\
&&& X_{k-1} & Y_{k-1,k} \\
&&&& X_k
\end{bmatrix}_{k \times k}.
$$
where the diagonal entries $X_i^2 \sim \chi^2_{n-i+1}$ and the super-diagonal entries $Y_{ij} \sim {\rm N}(0, 1)$; furthermore, all these random variables are mutually independent. 
\end{prop}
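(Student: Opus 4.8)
The plan is to build the two factors directly through Gram--Schmidt orthogonalization of the columns of $\mtx{\Gamma}$ and to lean on the rotational invariance of the standard Gaussian law at every stage; an alternative would be to compute the Jacobian of the {\sf QR} map, as in Muirhead's derivation, but the probabilistic route exposes the independence structure more transparently. First I would check that the decomposition is almost surely well defined: because $k \leq n$ and $\mtx{\Gamma}$ has a density, its columns are almost surely linearly independent, so $\mtx{\Gamma}$ admits a unique factorization $\mtx{\Gamma} = \mtx{Q}\mtx{R}$ with $\mtx{Q} \in \mathbb{V}_k^n$ and $\mtx{R}$ upper triangular with positive diagonal.

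Next I would establish the uniformity of $\mtx{Q}$ and its independence from $\mtx{R}$ simultaneously, using invariance. For any fixed orthogonal matrix $\mtx{U}$ we have $\mtx{U}\mtx{\Gamma} \sim \mtx{\Gamma}$, while $\mtx{U}\mtx{\Gamma} = (\mtx{U}\mtx{Q})\mtx{R}$ is again a valid {\sf QR} factorization; by uniqueness this shows $(\mtx{U}\mtx{Q}, \mtx{R}) \sim (\mtx{Q}, \mtx{R})$ for every $\mtx{U}$. Since $\mathrm{O}(n)$ acts transitively on $\mathbb{V}_k^n$, averaging a test function $f(\mtx{U}\mtx{Q})$ over the Haar measure on $\mathrm{O}(n)$ replaces it by a constant that does not depend on $\mtx{Q}$; feeding this into the equality of laws yields both that $\mtx{Q}$ is Haar-distributed on $\mathbb{V}_k^n$ (the unique invariant probability law under a transitive compact group action) and that $\mtx{Q}$ is independent of $\mtx{R}$.

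Finally I would read the distribution of the entries of $\mtx{R}$ off the recursion. Let $\vct{\gamma}_1, \dots, \vct{\gamma}_k$ denote the columns. At step $j$, conditioning on $\vct{q}_1, \dots, \vct{q}_{j-1}$, I split the fresh column into its coefficients $Y_{ij} = \ip{\vct{\gamma}_j}{\vct{q}_i}$ along the earlier frame directions and its projection onto the $(n-j+1)$-dimensional subspace orthogonal to $\vct{q}_1, \dots, \vct{q}_{j-1}$. Because $\vct{\gamma}_j$ is a standard Gaussian vector independent of the conditioning, rotational invariance forces the $Y_{ij}$ to be independent ${\rm N}(0,1)$ variables, and the orthogonal projection is a standard Gaussian on an $(n-j+1)$-dimensional space, so its length $X_j = R_{jj}$ obeys $X_j^2 \sim \chi^2_{n-j+1}$ and is independent of everything generated so far. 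Iterating over $j$ produces the claimed laws together with the mutual independence of all the $X_i$ and $Y_{ij}$.

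The hard part will be the bookkeeping in this last step: one must confirm that at each stage the newly revealed variables are genuinely independent of those produced earlier and of the accumulating frame, which rests on repeatedly invoking rotational invariance together with the independence of the fresh column $\vct{\gamma}_j$ from the $\sigma$-algebra generated by $\vct{\gamma}_1, \dots, \vct{\gamma}_{j-1}$. The measure-theoretic averaging argument for the independence of $\mtx{Q}$ and $\mtx{R}$ also deserves care, since it hinges on the uniqueness of the invariant probability measure under the transitive $\mathrm{O}(n)$-action.
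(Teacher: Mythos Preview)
The paper does not supply its own proof of this proposition; it is stated as a known result with a reference to Muirhead's book~\cite{Mui82:Aspects-Multivariate}.  Your argument is sound and follows the standard probabilistic route via Gram--Schmidt and rotational invariance, which is indeed different from Muirhead's derivation based on computing the Jacobian of the change of variables $\mtx{\Gamma} \mapsto (\mtx{Q}, \mtx{R})$.  The invariance step you give for the Haar distribution of $\mtx{Q}$ and its independence from $\mtx{R}$ is clean and correct, and the recursive identification of the entries of $\mtx{R}$ is also correct; your closing remarks pinpoint exactly the issue that needs care, namely that at each stage the conditional law of the freshly produced variables $(Y_{1j}, \dots, Y_{j-1,j}, X_j)$ does not depend on the conditioning $\sigma$-algebra, which upgrades conditional independence to unconditional independence of all the entries.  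The Jacobian approach that the paper cites is more mechanical but delivers the joint density of $\mtx{R}$ in one stroke, whereas your probabilistic approach is more conceptual and exposes the independence structure directly without any calculus.
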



We may now establish a comparison principle for a general convex function of a random $k$-frame.

\begin{thm}[Convex Comparison Principle] \label{thm:full-comparison}
Assume that $k \leq n$.  Let $\mtx{Q} \in \mathbb{M}^{n \times k}$ be uniformly distributed on the Stiefel manifold $\mathbb{V}_k^n$, and let $\mtx{\Gamma} \in \mathbb{M}^{n \times k}$ be a standard Gaussian matrix.  For each convex function $f : \mathbb{M}^{n \times k} \to \mathbb{R}$, it holds that
$$
\Expect f( \mtx{Q} ) \leq \Expect f( \alpha^{-1} \mtx{\Gamma} )
\quad\text{where}\quad
\alpha := \alpha(k, n) := \frac{1}{k} \sum\nolimits_{i=1}^k \Expect( X_{i} )
$$
and $X_{i}^2 \sim \chi^2_{n-i+1}$.  Similarly, for each concave function $g : \mathbb{M}^{n \times k} \to \mathbb{R}$, it holds that
$$
\Expect g( \mtx{Q} ) \geq \Expect g( \alpha^{-1} \mtx{\Gamma} ).
$$
\end{thm}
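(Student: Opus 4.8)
The plan is to bound $\Expect f(\alpha^{-1}\mtx{\Gamma})$ from below by $\Expect f(\mtx{Q})$ through two successive applications of Jensen's inequality: the Bartlett decomposition exposes the internal structure of $\mtx{\Gamma}$, and the orthogonal invariance of both factors is used to collapse the triangular factor to a scalar multiple of the identity. Throughout, the only facts needed are Proposition~\ref{prop:bartlett}, the left/right orthogonal invariance of a standard Gaussian matrix, and the right $O(k)$-invariance of a random $k$-frame.

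First I would introduce an auxiliary matrix $\mtx{V}$ drawn from Haar measure on the orthogonal group $O(k)$, independent of everything else. Since a standard Gaussian matrix is invariant under right multiplication by orthogonal matrices, $\alpha^{-1}\mtx{\Gamma} \sim \alpha^{-1}\mtx{\Gamma}\mtx{V}$; combining this with the Bartlett decomposition gives $\alpha^{-1}\mtx{\Gamma} \sim \alpha^{-1}\mtx{Q}\mtx{R}\mtx{V}$, where $\mtx{Q}$, $\mtx{R}$, and $\mtx{V}$ are mutually independent, $\mtx{Q}$ is a random $k$-frame, and $\mtx{R}$ is the Bartlett triangular factor. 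Conditioning on $(\mtx{Q},\mtx{V})$ and applying Jensen's inequality to the convex function $f$ along the linear map $\mtx{R}\mapsto\alpha^{-1}\mtx{Q}\mtx{R}\mtx{V}$ replaces $\mtx{R}$ by its mean. Because the off-diagonal Bartlett entries $Y_{ij}$ are centered, $\Expect\mtx{R} = \mtx{D} := \diag(\Expect X_1,\dots,\Expect X_k)$, so
$$
\Expect f(\alpha^{-1}\mtx{\Gamma}) = \Expect f(\alpha^{-1}\mtx{Q}\mtx{R}\mtx{V}) \ge \Expect f(\alpha^{-1}\mtx{Q}\mtx{D}\mtx{V}).
$$

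The crux is the second reduction, which turns the diagonal matrix $\mtx{D}$ into the scalar $\alpha$. The key algebraic identity is $\mtx{D}\mtx{V} = \mtx{V}(\mtx{V}^{\transp}\mtx{D}\mtx{V})$, so that $\mtx{Q}\mtx{D}\mtx{V} = (\mtx{Q}\mtx{V})(\mtx{V}^{\transp}\mtx{D}\mtx{V})$. Right invariance of the random frame lets me replace $\mtx{Q}\mtx{V}$ by a fresh random frame $\mtx{Q}'$ that is independent of $\mtx{V}$; then, conditioning on $\mtx{Q}'$ and applying Jensen's inequality once more over $\mtx{V}$, I invoke the standard averaging identity $\Expect(\mtx{V}^{\transp}\mtx{D}\mtx{V}) = k^{-1}(\trace\mtx{D})\,\Id = \alpha\,\Id$, valid because the only matrices fixed by every orthogonal conjugation are the scalar ones. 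This gives $\Expect f(\alpha^{-1}\mtx{Q}'(\mtx{V}^{\transp}\mtx{D}\mtx{V})) \ge \Expect f(\alpha^{-1}\mtx{Q}'\cdot\alpha\,\Id) = \Expect f(\mtx{Q}') = \Expect f(\mtx{Q})$, which closes the chain. The concave statement follows by reversing every instance of Jensen's inequality.

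The hard part, and the reason the argument is more delicate than a one-step conditional-expectation bound, is that one cannot simply condition $\mtx{\Gamma}$ on its $Q$-factor and discard $\mtx{R}$: that would leave the target $\Expect f(\alpha^{-1}\mtx{Q}\mtx{D})$ with a genuinely non-scalar $\mtx{D}$ whose diagonal entries $\Expect X_i$ decrease in $i$. Since $\Expect X_k$ can be far smaller than $\alpha$, the tempting inequality $\Expect f(\mtx{Q}\mtx{D}') \ge \Expect f(\mtx{Q})$ is simply false for mean-one diagonal $\mtx{D}'$ (take $f$ to be the squared Euclidean norm of the last column). The resolution is exactly the two-sided symmetrization above: inserting the Haar factor $\mtx{V}$ on the right of $\mtx{\Gamma}$ at the very outset makes $\mtx{D}$ surface in the conjugated form $\mtx{V}^{\transp}\mtx{D}\mtx{V}$, whose orthogonal average is the scalar $\alpha$ with no stray orthogonal factor left inside $f$. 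The one point demanding care is verifying that $\mtx{V}$ can be introduced and then absorbed into $\mtx{Q}$ while preserving the independence needed for each conditioning.
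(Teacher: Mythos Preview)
Your argument is correct and follows essentially the same route as the paper: Bartlett decomposition plus an auxiliary random matrix to symmetrize the triangular factor, then Jensen. The only cosmetic differences are that the paper uses a uniformly random \emph{permutation} matrix $\mtx{P}$ (so that $\Expect(\mtx{PRP}^{\transp})=\alpha\,\Id$ in a single stroke) and runs Jensen once, whereas you use Haar $\mtx{V}\in O(k)$ and split the averaging into two Jensen steps; your independence check for $(\mtx{QV},\mtx{V})$ is exactly the point that makes either version work.
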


\begin{proof}
The result is a direct consequence of the Bartlett decomposition and Jensen's inequality.  Define $\mtx{\Gamma}$, $\mtx{Q}$, and $\mtx{R}$ as in the statement of Proposition~\ref{prop:bartlett}.  Let $\mtx{P} \in \mathbb{M}^{k \times k}$ be a uniformly random permutation matrix, independent from everything else.

First, observe that
$$
\Expect ( \mtx{P} \mtx{R} \mtx{P}^T )
	= (\Expect \bar{\trace}(\mtx{R})) \cdot \Id
	= \alpha \Id
\quad\text{where}\quad
\alpha := \frac{1}{k} \sum\nolimits_{i=1}^k \Expect( X_{i} ).
$$
The symbol $\bar{\trace}$ denotes the normalized trace, and the random variable $X_{i} \sim \chi^2_{n-i+1}$ for each index $i = 1, \dots, k$.
Since the function $f$ is convex, Jensen's inequality allows that
$$
\Expect f( \mtx{Q} )
	= \Expect f(  \alpha^{-1} \mtx{Q} (\Expect \mtx{PRP}^T ) )
	\leq \Expect f( \alpha^{-1} \mtx{QPRP}^T ).
$$
It remains to simplify the random matrix in the last expression.

Recall that the Haar distribution on the Stiefel manifold $\mathbb{V}^n_k$ and the normalized Gaussian distribution on $\mathbb{M}^{n \times k}$ are both invariant under orthogonal transformations.  Therefore, $\mtx{Q} \sim \mtx{QS}$ and $\mtx{\Gamma} \sim \mtx{\Gamma S}^T$ for each fixed permutation matrix $\mtx{S}$.  It follows that
$$
\Expect[ f( \alpha^{-1} \mtx{QPRP}^T ) \, | \, \mtx{P} ]
	= \Expect[ f( \alpha^{-1} \mtx{QRP}^T ) \, | \, \mtx{P} ]
	= \Expect[ f( \alpha^{-1} \mtx{\Gamma P}^T ) \, | \, \mtx{P} ]
	= \Expect f( \alpha^{-1} \mtx{\Gamma} ),
$$
where we have also used the fact that $\mtx{Q}$ and $\mtx{R}$ are statistically independent.
Combining the last two displayed formulas with the tower property of conditional expectation, we reach
$$
\Expect f( \mtx{Q} )
	\leq \Expect \Expect[ f( \alpha^{-1} \mtx{Q PRP}^T ) \, | \, \mtx{P} ]
	= \Expect f( \alpha^{-1} \mtx{\Gamma} ).
$$
The proof for concave functions is analogous.
\end{proof}

For Theorem~\ref{thm:full-comparison} to be useful, we need to make some estimates for the constant $\alpha(k, n)$ that arises in the argument.  To that end, we state without proof a simple result on the moments of a chi-square random variable.

\begin{prop}[Chi-Square Moments] \label{prop:chisq-mom}
Let $\Xi^2$ be a chi-square random variable with $p$ degrees of freedom.  Then
$$
\Expect( \Xi ) = \frac{\sqrt{2} \cdot \Gamma((p+1)/2)}{\Gamma(p/2)}.
$$
\end{prop}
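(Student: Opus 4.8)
The plan is to establish the identity by direct integration against the chi-square density. First I would recall that a chi-square random variable $\Xi^2$ with $p$ degrees of freedom has the probability density $f(x) = \bigl(2^{p/2} \Gamma(p/2)\bigr)^{-1} x^{p/2-1} \econst^{-x/2}$ supported on $x > 0$. The quantity we want is the first moment of $\Xi = (\Xi^2)^{1/2}$, which is obtained by integrating $\sqrt{x}$ against this density.

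The key step is to observe that multiplying the integrand by $x^{1/2}$ simply raises the exponent on $x$ from $p/2 - 1$ to $(p+1)/2 - 1$, so that
$$
\Expect(\Xi) = \frac{1}{2^{p/2} \Gamma(p/2)} \int_0^\infty x^{(p+1)/2 - 1} \econst^{-x/2} \idiff{x}.
$$
I would then evaluate the remaining integral by the substitution $u = x/2$, which converts it into the standard Euler integral $\int_0^\infty u^{(p+1)/2 - 1} \econst^{-u} \idiff{u} = \Gamma((p+1)/2)$, up to a power of two collected from the change of variables. Tracking these powers of two and combining them with the normalizing constant yields the factor $2^{(p+1)/2}/2^{p/2} = \sqrt{2}$, which is exactly the stated leading constant.

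There is no substantive obstacle here: the argument is a one-line computation once the density is written down, and the only point demanding mild care is the bookkeeping of the powers of two that arise from the normalizing constant and from the substitution $u = x/2$. For completeness, one could also recover the same formula from the general expression for fractional moments of a chi-square (equivalently, gamma) law, but the direct calculation is the shortest and most self-contained route.
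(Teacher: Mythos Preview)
Your proposal is correct: the direct integration against the chi-square density, followed by the substitution $u = x/2$, cleanly produces the stated ratio of gamma functions, and your bookkeeping of the powers of two is accurate. Note, however, that the paper does not actually prove this proposition; it is explicitly ``state[d] without proof'' as a standard fact, so there is no in-paper argument to compare against. Your derivation is the natural one-line computation that any reader would supply, and it would serve perfectly well as the omitted justification.
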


Given the identity from Proposition~\ref{prop:chisq-mom}, standard inequalities for this ratio of gamma functions allow us to estimate the constant $\alpha$ in terms of elementary operations and radicals. 

\begin{lemma}[Estimates for the Constant] \label{lem:estimate-i}
The constant $\alpha(k, n)$ defined in Theorem~\ref{thm:full-comparison} satisfies
$$
\frac{1}{k} \sum\nolimits_{i=0}^{k-1}
	\sqrt{n - (i + 1/2)}
	\quad \leq \quad \alpha(k, n)
	\quad \leq \quad \frac{1}{k} \sum\nolimits_{i=0}^{k-1} \sqrt{n - i}.
$$
\end{lemma}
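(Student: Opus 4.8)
The plan is to peel the averaged sum defining $\alpha(k,n)$ apart into individual chi-moments and to control each one by a two-sided gamma-ratio inequality. From Theorem~\ref{thm:full-comparison} we have $\alpha(k,n) = \frac{1}{k}\sum_{i=1}^{k}\Expect(X_i)$ with $X_i^2 \sim \chi^2_{n-i+1}$, and the hypothesis $k \leq n$ guarantees that every degree-of-freedom parameter $n-i+1$ lies in $\{1, \dots, n\}$. Reindexing by $j = i-1$ replaces the parameter by $p = n - j$ with $j$ running from $0$ to $k-1$, so it is enough to establish the per-term bound
\[
\sqrt{p - \tfrac{1}{2}} \;\leq\; \Expect(\Xi) \;\leq\; \sqrt{p}
\qquad\text{whenever } \Xi^2 \sim \chi^2_p \text{ and } p \geq 1.
\]
Averaging this inequality over the parameters $p = n - j$ for $j = 0, \dots, k-1$ produces exactly the two sums in the statement, because the lower endpoint is $\sqrt{(n-j) - 1/2} = \sqrt{n - (j + 1/2)}$ and the upper endpoint is $\sqrt{n-j}$.

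To obtain the per-term bound I would substitute $x = p/2$ and invoke Proposition~\ref{prop:chisq-mom}, which gives $\Expect(\Xi) = \sqrt{2}\,\Gamma(x + \tfrac{1}{2})/\Gamma(x)$. Dividing through by $\sqrt{2}$ converts the target into the classical estimate
\[
\sqrt{x - \tfrac{1}{4}} \;\leq\; \frac{\Gamma(x + \tfrac{1}{2})}{\Gamma(x)} \;\leq\; \sqrt{x},
\qquad x \geq \tfrac{1}{2}.
\]
The right-hand inequality is immediate from the logarithmic convexity of $\Gamma$ (Bohr--Mollerup): since $\tfrac{1}{2}(x) + \tfrac{1}{2}(x+1) = x + \tfrac{1}{2}$, one gets $\Gamma(x+\tfrac{1}{2})^2 \leq \Gamma(x)\Gamma(x+1) = x\,\Gamma(x)^2$, and taking square roots finishes the upper bound.

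The left-hand inequality is the one genuine obstacle, and it is exactly the kind of sharpened Gautschi bound alluded to before the statement. I would prove it by a monotonicity argument along integer translates. Set $\psi(x) := \big(\Gamma(x+\tfrac{1}{2})/\Gamma(x)\big)^2$; the functional equation $\Gamma(x+1) = x\Gamma(x)$ yields the exact recursion $\psi(x+1)/\psi(x) = (x + \tfrac{1}{2})^2 / x^2$, and a routine calculation then shows that the quotient $\psi(x)/(x - \tfrac{1}{4})$ strictly decreases under $x \mapsto x+1$ (the ratio of consecutive values equals $1 - (16x^3 + 12x^2)^{-1} < 1$). Because $\Gamma(x+\tfrac{1}{2})/\Gamma(x) \sim \sqrt{x}$ forces $\psi(x)/(x-\tfrac{1}{4}) \to 1$, a decreasing sequence of translates tending to $1$ must stay above $1$, which is precisely $\psi(x) \geq x - \tfrac{1}{4}$. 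The only subtle point is pinning the limit to exactly $1$, for which I would cite the standard first-order asymptotic of the gamma ratio; alternatively one can bypass the argument entirely by quoting Kershaw's or Gurland's inequality, the latter giving the stronger $\psi(x) \geq 4x^2/(4x+1) \geq x - \tfrac{1}{4}$ directly.
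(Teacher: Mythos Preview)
Your proof is correct and follows essentially the same route as the paper: reduce $\alpha(k,n)$ to a sum of chi moments, invoke Proposition~\ref{prop:chisq-mom}, apply the two-sided gamma-ratio bound $\sqrt{p-1/2} \leq \sqrt{2}\,\Gamma((p+1)/2)/\Gamma(p/2) \leq \sqrt{p}$, and reindex. The only difference is that the paper simply quotes this gamma-ratio inequality as standard (remarking that the upper bound also follows from Jensen via $\Expect X_i \leq (\Expect X_i^2)^{1/2}$ and that ``the lower bound seems to require hard analysis''), whereas you supply that hard analysis yourself via the log-convexity and recursion arguments.
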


\begin{proof}
We require bounds for
$$
\alpha = \frac{1}{k} \sum\nolimits_{i=1}^{k} \Expect( X_i )
\quad\text{where}\quad
X_i^2 \sim \chi^2_{n-i+1}.
$$
Proposition~\ref{prop:chisq-mom} states that
$$
\Expect( X_i ) = \frac{\sqrt{2} \cdot \Gamma((p_i+1)/2)}{\Gamma(p_i/2)}
\quad\text{for $p_i = n - i + 1$.}
$$
This ratio of gamma functions appears frequently, and the following bounds are available.
$$
\sqrt{p- 1/2}
	< \frac{\sqrt{2} \cdot \Gamma((p + 1)/2)}{\Gamma(p/2)} 
	< \sqrt{p}
\quad\text{for $p \geq 1/2$.}
$$
Combine these relations and reindex the sums to reach the result.

The upper bound can be obtained directly from Jensen's inequality and the basic properties of a chi-square variable: $\Expect (X_i) \leq [\Expect (X_i^2) ]^{1/2} = \sqrt{n - i + 1}$.  In contrast, the lower bound seems to require hard analysis.
\end{proof}

For practical purposes, it is valuable to simplify the estimates from Lemma~\ref{lem:estimate-i} even more.  To accomplish this task, we interpret the sums in terms of basic integral approximations.

\begin{lemma}[Simplified Estimates] \label{lem:simple-estimate}
The constant $\alpha(k, n)$ defined in Theorem~\ref{thm:full-comparison} satisfies
$$
\frac{2}{3k} \left[ n^{3/2} - (n-k)^{3/2} \right]
	\quad \leq \quad \alpha(k, n)
	\quad \leq \quad \frac{2}{3k} \left[ n^{3/2} - (n - k)^{3/2} \right]
	+ \frac{1}{2k} \left[ \sqrt{n} - \sqrt{n - k} \right].
$$
The minimum value for the lower bound occurs when $k = n$, and
$$
\frac{2}{3} \sqrt{n}
	\quad\leq\quad \alpha(n, n)
	\quad\leq\quad \frac{2}{3} \sqrt{n} + {\rm o}(1)
	\quad\text{as $n \to \infty$.}
$$
Furthermore, when we express $k = \rho n$ for $\rho \in (0,1]$, it holds that
$$
\frac{1}{\alpha( \rho n, n )}
	\quad\leq\quad
	\frac{1}{\sqrt{n}} \cdot (1 + \rho / 2).
$$
\end{lemma}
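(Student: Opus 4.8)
The plan is to convert the two sums appearing in Lemma~\ref{lem:estimate-i} into the stated elementary closed forms by comparing them against the integral $\int_0^k \sqrt{n-x} \idiff{x} = \tfrac23[n^{3/2} - (n-k)^{3/2}]$, exploiting throughout that the integrand $x \mapsto \sqrt{n-x}$ is concave and decreasing on $[0,k]$. Every estimate below is just a one-line calculus comparison applied to this single integral.

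For the lower bound, I would recognize $\sum_{i=0}^{k-1}\sqrt{n-(i+1/2)}$ as the midpoint rule for that integral. A concave function evaluated at the midpoint of an interval is at least its average over the interval (the tangent line at the midpoint dominates the function and integrates to the midpoint value), so $\sqrt{n-(i+1/2)} \ge \int_i^{i+1}\sqrt{n-x}\idiff{x}$; summing and dividing by $k$ gives the lower bound. For the upper bound I would instead compare $\sum_{i=0}^{k-1}\sqrt{n-i}$ against the trapezoid rule, which concavity makes an underestimate: $\tfrac12\sum_{i=0}^{k-1}[\sqrt{n-i}+\sqrt{n-i-1}] \le \tfrac23[n^{3/2}-(n-k)^{3/2}]$. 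Telescoping the left-hand side rewrites it as $\sum_{i=0}^{k-1}\sqrt{n-i} - \tfrac12[\sqrt{n}-\sqrt{n-k}]$, and rearranging yields exactly the upper estimate after division by $k$.

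The case $k=n$ is then immediate: the lower bound collapses to $\tfrac23\sqrt{n}$, while the correction term $\tfrac1{2n}\sqrt{n}=\tfrac1{2\sqrt n}$ in the upper bound is ${\rm o}(1)$. To verify that the lower bound is minimized at $k=n$, I would set $k=\rho n$, so the lower bound becomes $\tfrac23\sqrt{n}\cdot h(\rho)$ with $h(\rho) = [1-(1-\rho)^{3/2}]/\rho$, and then substitute $v=\sqrt{1-\rho}$. Factoring $1-v^3 = (1-v)(1+v+v^2)$ and $\rho = 1-v^2 = (1-v)(1+v)$ reveals $h = 1 + v^2/(1+v)$, which is increasing in $v$ and hence decreasing in $\rho$, attaining its minimum value $1$ at $\rho=1$.

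Finally, for the reciprocal estimate I would use $\alpha(\rho n, n) \ge \tfrac23\sqrt{n}\,h(\rho)$, reducing the claim to $h(\rho) \ge 3/(2+\rho)$. Clearing denominators and cancelling a factor of $(1-\rho)$ turns this into $(2+\rho)\sqrt{1-\rho} \le 2$; squaring the two nonnegative sides produces $(2+\rho)^2(1-\rho) = 4 - 3\rho^2 - \rho^3 \le 4$ for $\rho \in (0,1]$, which is clear. I expect the integral comparisons to be routine bookkeeping; the one step that needs genuine care is this last reciprocal bound, where the reparametrization $v=\sqrt{1-\rho}$ (equivalently, the cancellation of the common factor $1-\rho$) is what collapses an awkward inequality in fractional powers into a transparent polynomial one.
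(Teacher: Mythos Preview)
Your argument is correct and matches the paper's proof on the two main estimates: both use the midpoint rule (overestimate for a concave integrand) for the lower bound and the trapezoid rule (underestimate for a concave integrand) for the upper bound, followed by the same telescoping rearrangement.

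The two auxiliary claims are where you diverge slightly. For the minimality of the lower bound at $k=n$, the paper observes in one line that $k \mapsto \tfrac1k\int_0^k h(x)\idiff{x}$ is the running average of a decreasing function and hence itself decreasing; your substitution $v=\sqrt{1-\rho}$ and the identity $h(\rho)=1+v^2/(1+v)$ reach the same conclusion by explicit computation. For the reciprocal bound, the paper asserts that $\rho \mapsto \rho/[1-(1-\rho)^{3/2}]$ is convex on $(0,1]$ and therefore lies below the chord $(2+\rho)/3$ joining its endpoint values $2/3$ and $1$; you instead verify the equivalent inequality $(2+\rho)\sqrt{1-\rho}\le 2$ directly by squaring. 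Your route is more self-contained (the paper's convexity claim is stated without justification), while the paper's phrasing is more conceptual; both are short and valid.
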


\begin{proof}
Fix the parameters $k$ and $n$. Define the real-valued function $h(x) = \sqrt{n - x}$, and observe that $h$ is concave and decreasing on its natural domain.  The lower bound for $\alpha$ from Lemma~\ref{lem:estimate-i} implies that
$$
\alpha \geq \frac{1}{k} \sum\nolimits_{i=0}^{k-1} h(i + 1/2)
	\geq \frac{1}{k} \int_{0}^{k} h(x) \idiff{x}.
$$
To justify the second inequality, we observe that the sum corresponds with the midpoint-rule approximation to the integral.  Because the integrand is concave, the midpoint rule must overestimate the integral.  Evaluate the integral to obtain the stated lower bound.

To see that the minimum value for the lower bound occurs when $k = n$, notice that
$$
k \longmapsto \frac{1}{k} \int_0^k h(x) \idiff{x}
$$
is the running average of a decreasing function.  Of course, the running average also decreases.

Next, we use the relation $k = \rho n$ to simplify (the reciprocal of) the lower bound, which yields
$$
\frac{1}{\alpha(\rho n, n)}
	\leq 1.5 \cdot n^{-1/2} \cdot \frac{\rho}{1 - (1-\rho)^{3/2}}
	\leq n^{-1/2} \cdot (1 + \rho/2).
$$
The second inequality holds because the fraction is a convex function of $\rho$ on the interval $(0,1]$, so we may bound it above by the chord $\rho \mapsto (2+\rho)/3$ connecting the endpoints. 

The proof of the upper bound follows from a related principle: The trapezoidal rule underestimates the integral of a concave function.  Lemma~\ref{lem:estimate-i} ensures that
$$
\alpha \leq \frac{1}{k} \sum\nolimits_{i=0}^{k-1} h(i)
	\leq \frac{1}{k} \left[ \int_0^{k} h(x) \idiff{x}
	+ \frac{1}{2} \left( h(0) - h(k) \right) \right].
$$
Here, we have applied the trapezoidal rule on the interval $[0, k]$ and then redistributed the terms associated with the endpoints.  Evaluate the integral to complete the bound.
\end{proof}

We are now prepared to establish the main result.

\begin{proof}[Proof of Theorem~\ref{thm:norm-comparison}]
Let $\mtx{Q}$ be a random matrix distributed uniformly on the Stiefel manifold $\mathbb{V}_{k}^n$, and let $\mtx{G} \in \mathbb{M}^{n \times k}$ be a normalized Gaussian matrix.  We can write $\mtx{G} = n^{-1/2} \mtx{\Gamma}$ where $\mtx{\Gamma}$ is a standard normal matrix.

Suppose that $\abs{\cdot}$ is a nonnegative, sublinear, convex function and that $\Phi$ is a weakly increasing, convex function.  Then the function
$\mtx{M} \mapsto \Phi( \abs{ \mtx{M} } )$ is also convex.  Theorem~\ref{thm:full-comparison} demonstrates that
$$
\Expect \Phi( \abs{ \mtx{Q} } )
	\leq \Expect \Phi \big( \abs{ \alpha^{-1} \mtx{\Gamma} } \big)
	= \Expect \Phi \big( \alpha^{-1} \sqrt{n} \cdot \abs{\mtx{G}}  \big).
$$
For $k = \rho n$, Lemma~\ref{lem:simple-estimate} ensures that the constant $\alpha$ satisfies 
$$
\alpha^{-1} \sqrt{n} \leq 1 + \rho /2.
$$
Given that the function $\Phi$ is increasing and $\abs{\mtx{G}} \geq 0$, we conclude that
$$
\Expect \Phi( \abs{ \mtx{Q} } )
	\leq \Phi \big( \alpha^{-1} \sqrt{n} \cdot \abs{ \mtx{G} } \big)
	\leq \Phi( (1 + \rho/2) \cdot \abs{ \mtx{G} } ).
$$
This argument establishes the main part of the theorem.  To establish the remaining assertion, we simply assign $\rho = 1$, the maximum value allowed.
\end{proof}

\section{Partial Converses} \label{sec:converse}

There are at least a few situations where it is possible to reverse the inequality of Theorem~\ref{thm:norm-comparison}.  To develop these results, we record another basic observation about Gaussian matrices~\cite{Mui82:Aspects-Multivariate}.

\begin{prop}[Polar Factorization] \label{prop:polar}
Assume that $k \leq n$.  Let $\mtx{\Gamma} \in \mathbb{M}^{n \times k}$ be a standard Gaussian matrix.  Then
$$
\mtx{\Gamma}_{n \times k} \sim \mtx{Q}_{n \times k} \mtx{W}_{k \times k}.
$$
The factors $\mtx{Q}$ and $\mtx{W}$ are statistically independent.  The matrix $\mtx{Q}$ is uniformly distributed on the Stiefel manifold $\mathbb{V}_k^n$, and the matrix $\mtx{W}$ is the positive square root of a $k \times k$ Wishart matrix with $n$ degrees of freedom.
\end{prop}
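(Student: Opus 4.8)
The plan is to build the factorization explicitly from $\mtx{\Gamma}$ and then exploit the left orthogonal invariance of the standard Gaussian distribution, in the same spirit as the proof of Theorem~\ref{thm:full-comparison}. First I would observe that, because $k \leq n$ and the entries of $\mtx{\Gamma}$ are independent and absolutely continuous, $\mtx{\Gamma}$ has full column rank with probability one; hence the Gram matrix $\mtx{\Gamma}^\adj \mtx{\Gamma}$ is positive definite almost surely. Define $\mtx{W} := (\mtx{\Gamma}^\adj \mtx{\Gamma})^{1/2}$, the unique positive-definite square root, and set $\mtx{Q} := \mtx{\Gamma} \mtx{W}^{-1}$. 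A direct computation gives $\mtx{Q}^\adj \mtx{Q} = \mtx{W}^{-1} (\mtx{\Gamma}^\adj \mtx{\Gamma}) \mtx{W}^{-1} = \Id$, so $\mtx{Q} \in \mathbb{V}_k^n$ and $\mtx{\Gamma} = \mtx{Q}\mtx{W}$ is a valid polar decomposition, almost surely unique because $\mtx{W}$ is positive definite. By construction $\mtx{W}$ is the positive square root of $\mtx{\Gamma}^\adj \mtx{\Gamma}$, which is the definition of a $k \times k$ Wishart matrix with $n$ degrees of freedom, so the claim about $\mtx{W}$ holds immediately. It remains to identify the law of $\mtx{Q}$ and to verify independence.

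Next I would track how the polar factors respond to a rotation. Fix $\mtx{U} \in \mathrm{O}(n)$. Since $(\mtx{U}\mtx{\Gamma})^\adj (\mtx{U}\mtx{\Gamma}) = \mtx{\Gamma}^\adj \mtx{\Gamma}$, the $\mtx{W}$-factor of $\mtx{U}\mtx{\Gamma}$ is again $\mtx{W}$, while its $\mtx{Q}$-factor is $\mtx{U}\mtx{Q}$, as $(\mtx{U}\mtx{Q})^\adj (\mtx{U}\mtx{Q}) = \Id$. By uniqueness, the map $\mtx{\Gamma} \mapsto (\mtx{Q}, \mtx{W})$ intertwines left multiplication by $\mtx{U}$ with the transformation $(\mtx{Q}, \mtx{W}) \mapsto (\mtx{U}\mtx{Q}, \mtx{W})$. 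Because the standard Gaussian distribution is invariant under $\mtx{\Gamma} \mapsto \mtx{U}\mtx{\Gamma}$, I conclude that $(\mtx{Q}, \mtx{W}) \sim (\mtx{U}\mtx{Q}, \mtx{W})$ jointly, for every fixed $\mtx{U} \in \mathrm{O}(n)$.

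Finally I would convert this family of distributional identities into both conclusions at once by averaging over $\mtx{U}$. For bounded measurable test functions $f$ and $g$, the identity above gives $\Expect[f(\mtx{Q})g(\mtx{W})] = \Expect[f(\mtx{U}\mtx{Q})g(\mtx{W})]$ for each $\mtx{U}$; integrating against the Haar probability measure on $\mathrm{O}(n)$ and applying Fubini's theorem replaces $f(\mtx{Q})$ by the average of $f(\mtx{U}\mtx{Q})$ over $\mtx{U}$. Because $\mathrm{O}(n)$ acts transitively on $\mathbb{V}_k^n$, that average is a constant $c_f$ independent of the value of $\mtx{Q}$, equal to the integral of $f$ against the Haar measure $\nu$ on the Stiefel manifold. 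Hence $\Expect[f(\mtx{Q})g(\mtx{W})] = c_f \, \Expect[g(\mtx{W})]$; taking $g \equiv 1$ identifies $c_f = \Expect f(\mtx{Q})$ and shows $\mtx{Q} \sim \nu$ is Haar-distributed, after which the same identity reads $\Expect[f(\mtx{Q})g(\mtx{W})] = \Expect[f(\mtx{Q})]\,\Expect[g(\mtx{W})]$, which is precisely independence. I expect the main obstacle to be the measure-theoretic bookkeeping in this last step: each individual identity fixes only one rotation, and the real content is that averaging over the transitive $\mathrm{O}(n)$-action simultaneously pins down the Haar law of $\mtx{Q}$ and decouples it from $\mtx{W}$. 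Verifying measurability of the polar map and the almost-sure full-rank event is routine but worth a remark.
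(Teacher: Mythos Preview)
Your argument is correct and complete: constructing $\mtx{W}=(\mtx{\Gamma}^\adj\mtx{\Gamma})^{1/2}$ and $\mtx{Q}=\mtx{\Gamma}\mtx{W}^{-1}$, observing that left multiplication by $\mtx{U}\in\mathrm{O}(n)$ sends $(\mtx{Q},\mtx{W})$ to $(\mtx{U}\mtx{Q},\mtx{W})$, and then averaging over Haar measure on $\mathrm{O}(n)$ to obtain both the Haar law of $\mtx{Q}$ and its independence from $\mtx{W}$, is a clean and standard route.

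As for comparison, the paper does not actually prove Proposition~\ref{prop:polar}: it is recorded without argument and attributed to Muirhead's book~\cite{Mui82:Aspects-Multivariate}. So you have supplied more than the paper does. Your invariance-and-averaging approach is essentially the same idea that underlies the Bartlett decomposition (Proposition~\ref{prop:bartlett}), which the paper likewise imports from~\cite{Mui82:Aspects-Multivariate}; the only difference between the two factorizations is whether one takes the Cholesky factor or the positive square root of $\mtx{\Gamma}^\adj\mtx{\Gamma}$, and your argument would establish either one with the obvious modification.
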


The first converse concerns a \term{right operator ideal norm}; that is, a norm $\triplenorm{ \cdot }$ that satisfies the relation
$
\triplenorm{ \mtx{AB} } \leq \triplenorm{ \mtx{A} } \cdot \norm{ \mtx{B} },
$
where $\norm{ \cdot }$ is the spectral norm.

\begin{thm}[Partial Converse I] \label{thm:pc1}
Assume that $k = \rho n$ for $\rho \in (0,1]$.  Let $\mtx{Q}$ be uniformly distributed on the Stiefel manifold $\mathbb{V}_k^n$, and let $\mtx{G} \in \mathbb{M}^{n \times k}$ be a normalized Gaussian matrix.  For each right operator ideal norm $\triplenorm{ \cdot }$, it holds that
$$
\Expect \triplenorm{ \mtx{G} }
	\leq (1 + \sqrt{\rho}) \cdot \Expect \triplenorm{ \mtx{Q} }.
$$ 
\end{thm}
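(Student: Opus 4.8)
The plan is to combine the polar factorization of Proposition~\ref{prop:polar} with the defining inequality of a right operator ideal norm and Gordon's estimate, Proposition~\ref{prop:gordon}. First I would write $\mtx{G} = n^{-1/2} \mtx{\Gamma}$ for a standard Gaussian matrix $\mtx{\Gamma}$ and invoke Proposition~\ref{prop:polar} to realize $\mtx{\Gamma} \sim \mtx{Q} \mtx{W}$, where $\mtx{Q}$ is uniform on $\mathbb{V}_k^n$, the factor $\mtx{W}$ is the positive square root of an $n$-degree Wishart matrix, and the two factors are \emph{independent}. Consequently $\mtx{G} \sim \mtx{Q} \, (n^{-1/2} \mtx{W})$.

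Next I would apply the ideal-norm inequality $\triplenorm{ \mtx{A} \mtx{B} } \leq \triplenorm{ \mtx{A} } \cdot \norm{ \mtx{B} }$ with $\mtx{A} = \mtx{Q}$ and $\mtx{B} = n^{-1/2} \mtx{W}$, which yields $\triplenorm{ \mtx{G} } \leq \triplenorm{ \mtx{Q} } \cdot n^{-1/2} \norm{ \mtx{W} }$ almost surely. Taking expectations and using the independence of $\mtx{Q}$ and $\mtx{W}$, the right-hand side factors into a product of expectations: $\Expect \triplenorm{ \mtx{G} } \leq \Expect \triplenorm{ \mtx{Q} } \cdot n^{-1/2} \Expect \norm{ \mtx{W} }$. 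It is precisely the independence coming from the polar factorization that licenses this split, so I would take care to state it explicitly.

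The remaining task is to show $n^{-1/2} \Expect \norm{ \mtx{W} } \leq 1 + \sqrt{\rho}$. The key observation is the identity $\norm{ \mtx{W} } = \norm{ \mtx{\Gamma} }$ in spectral norm: since $\mtx{Q}$ has orthonormal columns, left multiplication by $\mtx{Q}$ is an isometry on the relevant subspace and preserves singular values, so $\mtx{\Gamma} \sim \mtx{Q} \mtx{W}$ and $\mtx{W}$ share the same spectral norm (equivalently, $\mtx{W}^2 \sim \mtx{\Gamma}^\adj \mtx{\Gamma}$, whose top eigenvalue is $\norm{ \mtx{\Gamma} }^2$). Hence $n^{-1/2} \norm{ \mtx{W} } = \norm{ n^{-1/2} \mtx{\Gamma} } = \norm{ \mtx{G} }$, and Proposition~\ref{prop:gordon} gives $\Expect \norm{ \mtx{G} } \leq 1 + \sqrt{k/n} = 1 + \sqrt{\rho}$. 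Substituting this into the previous display completes the argument.

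I do not expect a genuine obstacle here: the proof is a short chain of reductions. The only substantive step is the identification of $\norm{ \mtx{W} }$ with the spectral norm of the normalized Gaussian matrix, after which everything follows from independence and Gordon's bound. The one point demanding care is ensuring that the factorization really delivers independent $\mtx{Q}$ and $\mtx{W}$, since this is exactly what allows the expectation of the product to split.
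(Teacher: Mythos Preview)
Your proposal is correct and follows essentially the same route as the paper's proof: polar factorization $\mtx{\Gamma} \sim \mtx{Q}\mtx{W}$ with independent factors, the right operator ideal inequality $\triplenorm{\mtx{QW}} \leq \triplenorm{\mtx{Q}}\cdot\norm{\mtx{W}}$, splitting the expectation by independence, and then identifying $n^{-1/2}\norm{\mtx{W}}$ with $\norm{\mtx{G}}$ to invoke Gordon's bound. The paper phrases the last identification via $\mtx{W} \sim (\mtx{\Gamma}^\adj\mtx{\Gamma})^{1/2}$, which is exactly your parenthetical observation.
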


\begin{proof}
The proof uses the polar factorization of the Gaussian matrix described in Proposition~\ref{prop:polar}.  For a standard Gaussian matrix $\mtx{\Gamma} \in \mathbb{M}^{n \times k}$,
$$
\Expect \triplenorm{ \mtx{G} }
	= n^{-1/2} \Expect \triplenorm{ \mtx{\Gamma} }
	= n^{-1/2} \Expect \triplenorm{ \mtx{QW} }
	\leq n^{-1/2} \Expect (\triplenorm{ \mtx{Q} } \cdot \norm{\mtx{W}} )
	= n^{-1/2} (\Expect \triplenorm{ \mtx{Q} }) \cdot( \Expect \norm{\mtx{W}} ).
$$
The last relation relies on the independence of the polar factors.  To continue, we note that the Wishart square root $\mtx{W}$ has the same distribution as $(\mtx{\Gamma}^\adj \mtx{\Gamma})^{1/2}$.  Therefore,
$$
n^{-1/2} \Expect \norm{ \mtx{W} }
	= n^{-1/2} \Expect \big( \norm{ \mtx{\Gamma}^\adj \mtx{\Gamma} }^{1/2} \big)
	= n^{-1/2} \Expect \norm{ \mtx{\Gamma} }
	= \Expect \norm{ \mtx{G} }
	\leq 1 + \sqrt{k/n},
$$
where the last bound follows from Gordon's result, Proposition~\ref{prop:gordon}.
\end{proof}

A version of Theorem~\ref{thm:pc1} also holds for higher moments:
$$
\Expect( \triplenorm{ \mtx{G} }^m )
	\leq \Expect( \norm{ \mtx{G} }^m ) \cdot \Expect( \triplenorm{ \mtx{Q} }^m )
	\leq \cnst{C} \sqrt{m} \cdot (1 + \sqrt{\rho})
	\cdot \Expect( \triplenorm{ \mtx{Q} }^m )
	\quad\text{when $m \geq 1$}.
$$
The second inequality holds because moments of a Gaussian series are equivalent~\cite[Cor.~3.2]{LT91:Probability-Banach}.

We have a second result that holds for other types of operator norms.
We omit the proof, which, by now, should be obvious.

\begin{thm}[Partial Converse II]
Assume that $k \leq n$.  Let $\mtx{Q}$ be uniformly distributed on the Stiefel manifold $\mathbb{V}_k^n$, and let $\mtx{G} \in \mathbb{M}^{k \times n}$ be a normalized Gaussian matrix.  Suppose that $\norm{\cdot}_Y$ is a norm on $\mathbb{R}^k$ and $\norm{ \cdot }_Z$ is a norm on $\mathbb{R}^n$.  Then
$$
\Expect \norm{ \mtx{G} }_{Y \to Z}
	\leq (n^{-1/2} \Expect \norm{ \mtx{T} }_{Y \to Y} )
	\cdot (\Expect \norm{ \mtx{Q} }_{Y \to Z} )
$$
where $\mtx{T}$ is either the upper-triangular matrix $\mtx{R}$ defined in Proposition~\ref{prop:bartlett} or the Wishart square root $\mtx{W}$ defined in Proposition~\ref{prop:polar}.
\end{thm}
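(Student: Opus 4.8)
The plan is to mirror the proof of Theorem~\ref{thm:pc1} (Partial Converse~I), substituting the defining inequality of a right operator ideal norm with the elementary submultiplicativity of induced operator norms. Since $\mtx{G} = n^{-1/2}\mtx{\Gamma}$ for a standard Gaussian matrix $\mtx{\Gamma}$, and since both the Bartlett decomposition (Proposition~\ref{prop:bartlett}) and the polar factorization (Proposition~\ref{prop:polar}) furnish a representation $\mtx{\Gamma} \sim \mtx{Q}\mtx{T}$ with $\mtx{Q}$ uniform on $\mathbb{V}_k^n$ and $\mtx{T} \in \mathbb{M}^{k \times k}$ (namely $\mtx{R}$ or $\mtx{W}$) statistically independent of $\mtx{Q}$, the argument applies verbatim to either choice of $\mtx{T}$.

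First I would record the composition structure: $\mtx{T}$ acts as a map $(\mathbb{R}^k, \norm{\cdot}_Y) \to (\mathbb{R}^k, \norm{\cdot}_Y)$ and $\mtx{Q}$ as a map $(\mathbb{R}^k, \norm{\cdot}_Y) \to (\mathbb{R}^n, \norm{\cdot}_Z)$, so their product $\mtx{Q}\mtx{T}$ carries $(\mathbb{R}^k, \norm{\cdot}_Y)$ to $(\mathbb{R}^n, \norm{\cdot}_Z)$. The induced operator norm is submultiplicative with respect to this factorization, giving
$$
\norm{ \mtx{Q}\mtx{T} }_{Y \to Z} \leq \norm{ \mtx{Q} }_{Y \to Z} \cdot \norm{ \mtx{T} }_{Y \to Y}.
$$
Next I would take expectations and invoke the independence of $\mtx{Q}$ and $\mtx{T}$ to split the product:
$$
\Expect \norm{ \mtx{\Gamma} }_{Y \to Z}
	= \Expect \norm{ \mtx{Q}\mtx{T} }_{Y \to Z}
	\leq (\Expect \norm{ \mtx{Q} }_{Y \to Z}) \cdot (\Expect \norm{ \mtx{T} }_{Y \to Y}).
$$
Finally, inserting the scaling $\Expect \norm{ \mtx{G} }_{Y \to Z} = n^{-1/2} \Expect \norm{ \mtx{\Gamma} }_{Y \to Z}$ and regrouping the factor $n^{-1/2}$ with $\Expect \norm{ \mtx{T} }_{Y \to Y}$ yields the stated bound.

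The only point requiring care is the submultiplicativity step, since it relies on $\mtx{T}$ being an endomorphism of $(\mathbb{R}^k, \norm{\cdot}_Y)$ rather than a map between the two distinct normed spaces; this is precisely why the inner factor is measured in the $Y \to Y$ norm while the outer factor is measured in the $Y \to Z$ norm. With the dimensions read consistently ($\mtx{G}, \mtx{\Gamma}, \mtx{Q} \in \mathbb{M}^{n \times k}$ and $\mtx{T} \in \mathbb{M}^{k \times k}$), no further analysis is needed: unlike Partial Converse~I, which calls on Gordon's bound (Proposition~\ref{prop:gordon}) to control $\Expect \norm{ \mtx{W} }$, the present statement leaves the factor $n^{-1/2} \Expect \norm{ \mtx{T} }_{Y \to Y}$ unevaluated, so the proof is purely structural and the main work is simply the bookkeeping of which normed space each factor acts between.
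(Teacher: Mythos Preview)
Your proposal is correct and is exactly the argument the paper has in mind: the paper omits the proof as ``obvious'' after Theorem~\ref{thm:pc1}, and your use of the factorization $\mtx{\Gamma} \sim \mtx{Q}\mtx{T}$ (with $\mtx{T} \in \{\mtx{R}, \mtx{W}\}$), submultiplicativity of induced operator norms, and independence of the factors is precisely that intended argument. Your remark about reading the dimensions consistently as $\mtx{G}, \mtx{\Gamma}, \mtx{Q} \in \mathbb{M}^{n \times k}$ is also apt, since the stated $\mathbb{M}^{k \times n}$ appears to be a typo.
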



\section*{Acknowledgments}

The author would like to thank Ben Recht and Michael Todd for encouraging him to refine and present these results.  Alex Gittens and Tiefeng Jiang provided useful comments on a preliminary draft of this article.  The anonymous referees offered several valuable comments.  This work has been supported in part by ONR awards N00014-08-1-0883 and N00014-11-1-0025, AFOSR award FA9550-09-1-0643, and a Sloan Fellowship. Some of the research took place at Banff International Research Station (BIRS).

\bibliographystyle{alpha}
\bibliography{unitary}

\end{document}